\journal{Journal of Finite Fields and Their Applications}
\newcommand{\Z}{\mathbb{Z}}
\newcommand{\F}{\mathbb{F}}
\newtheorem{thm}{Theorem}[section]
\newtheorem{cor}[thm]{Corollary}
\newtheorem{lem}[thm]{Lemma}
\begin{document}
\begin{CJK}{GBK}{song}
\begin{frontmatter}

\title{Piecewise constructions of inverses of cyclotomic mapping permutation polynomials\tnoteref{t1}}
\tnotetext[t1]{This work was supported by the NSF of China (Grant Nos. 11371106, 61502113, 61363069)
and the Guangdong Provincial NSF (Grant No. 2015A030310174).}

\author[GL,GZ1]{Yanbin Zheng}
\ead{zhengyanbin@guet.edu.cn}

\author[GZ1,GZ2]{Yuyin Yu}
\ead{yuyuyin@163.com}

\author[GL]{Yuanping Zhang}

\ead{ypzhang12@gmail.com}

\author[GZ1,GZ2]{Dingyi Pei\corref{cor}}
\cortext[cor]{Corresponding author.}

\ead{gztcdpei@scut.edu.cn}

\address[GL]{Guangxi Key Laboratory of Trusted Software, Guilin University of Electronic Technology,
              Guilin 541004, China}
\address[GZ1]{School of Mathematics and Information Science, Guangzhou University, Guangzhou 510006, China}

\address[GZ2]{Key Laboratory of Mathematics and Interdisciplinary Sciences of Guangdong Higher
 Education Institutes, Guangzhou University, Guangzhou 510006, China}

\begin{abstract}
  Given a permutation polynomial of a large finite field, finding its inverse is usually a hard problem. Based on a piecewise interpolation formula, we construct the inverses of cyclotomic mapping permutation polynomials of arbitrary finite fields.
\end{abstract}
\begin{keyword}
 Inverse of permutation polynomial \sep Piecewise interpolation formula \sep Cyclotomic mapping
\MSC[2010]  11T06\sep 11T71
\end{keyword}

\end{frontmatter}

\section{Introduction}
For $q$ a prime power, let $\F_{q}$ denote the finite field containing $q$ elements,
and $\F_{q}[x]$ the ring of polynomials over~$\F_{q}$. A polynomial $f(x) \in \F_{q}[x]$ is called a
permutation polynomial (PP) of $\F_{q}$ if it induces a bijection of $\F_{q}$.
We define a polynomial $f^{-1}(x)$ as the inverse of $f(x)$ over $\F_{q}$ if $f^{-1}(f(c)) =c$ for all $c \in \F_{q}$, or equivalently $f^{-1}(f(x)) \equiv x \pmod{x^{q} -x}$.  Given a PP $f(x)$ of $\F_{q}$, its inverse is unique in the sense of reduction modulo $x^q -x$. In theory one could use the Lagrange Interpolation Formula to compute the inverse, i.e.,
\begin{equation*}\label{Lag-1}
f^{-1}(x) =\sum_{c \in \F_{q}}c \big(1-(x-f(c))^{q-1}\big).
\end{equation*}
It is a point-by-point interpolation formula and the computing is very inefficient for large~$q$. In fact, finding the inverse of a PP of a large finite field is a hard problem except for the well-known classes such as the inverses of linear polynomials, monomials, and some Dickson polynomials. There are only several papers on the inverses of some special classes of PPs, see~\cite{MR, Wang-1} for the inverse of PPs of the form $x^rh(x^{(q-1)/d})$, \cite{Wu_L, Wu_L-1} for the inverse of linearized PPs, \cite{Coulter-1,Wu_bil} for the inverses of two classes of bilinear PPs, \cite{TW-1} for the inverses of more general classes of PPs.

The basic idea of piecewise constructions of PPs is to partition a finite field into subsets and to study the permutation property through their behavior on the subsets.
Although the idea is not new~\cite{Carlitz62, cm}, it is still currently being used to find new PPs \cite{CHZ14, LHT13, FH-PW, Hou11, Hou15, Wang-cyc, YD-AGW2, ZH12}.
In our recent work~\cite{zyp-1}, the piecewise idea is employed to construct the inverse of a large class of PPs. In Section~2, a piecewise interpolation formula for the inverses of arbitrary PPs of finite fields is presented, which generalizes the Lagrange Interpolation Formula and the result in~\cite{zyp-1}. In Section~3, using our piecewise interpolation formula, we construct the inverses of cyclotomic mapping PPs studied in~\cite{Wang-cyc}. Section~4 gives the explicit inverses of special cyclotomic mapping PPs.

\section{Piecewise constructions of PPs and their inverses}
The idea of piecewise constructions of PPs was summarized in \cite{CHZ14} by Cao, Hu and Zha,
which can also be applied to construct PPs over finite rings. For later convenience,
the following lemma expresses it in terms of finite fields.
\begin{lem}\label{PW}(See \cite[Proposition 3]{CHZ14}.)
Let $D_{1},\cdots, D_{m}$ be a partition of  $\F_{q}$,
and $f_{1}(x)$, $\cdots$, $f_{m}(x) \in \F_{q}[x]$. Define
\begin{equation}\label{f-G}
  f(x) = \sum_{i=1}^{m}f_{i}(x)I_{D_{i}}(x),
\end{equation}
where $I_{D_{i}}(x)$ is the characteristic function of $D_{i}$, i.e., $I_{D_{i}}(x) = 1$
if $x \in D_{i}$ and $I_{D_{i}}(x) = 0$ otherwise. Then $f(x)$ is a PP of $\F_{q}$ if and only if
\begin{enumerate}[$(i)$]
   \item $f_{i}$ is injective on $D_{i}$ for each $1 \le i \le m;$ and
   \item $f_{i}(D_{i}) \cap f_j(D_{j}) = \emptyset$
   for all $1 \le i \ne j \le m$.
\end{enumerate}
\end{lem}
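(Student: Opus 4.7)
The plan is to exploit the finiteness of $\F_q$ to reduce "PP" to "injective", and then to translate injectivity of the glued map $f$ directly into the two conditions on the pieces. The starting observation is that because $D_1,\dotsc,D_m$ partition $\F_q$, exactly one characteristic function $I_{D_i}$ takes the value $1$ at each point $x$, so the definition of $f$ collapses to $f(x) = f_i(x)$ whenever $x \in D_i$. In particular the image of $D_i$ under the global function $f$ coincides with $f_i(D_i)$, which is precisely the set appearing in condition $(ii)$.

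For the forward direction I would argue by contrapositive. If $(i)$ fails for some $i$, then two distinct elements of $D_i$ share a value under $f_i$, hence under $f$, so $f$ is not injective. If $(ii)$ fails, a common element of $f_i(D_i) \cap f_j(D_j)$ for $i \neq j$ furnishes two preimages, one in $D_i$ and one in $D_j$; these preimages are automatically distinct since the $D_k$ are disjoint, again blocking injectivity. For the reverse direction, assume $(i)$ and $(ii)$ and suppose $f(x) = f(y)$ with $x \in D_i$ and $y \in D_j$. If $i \neq j$, the common value would lie in $f_i(D_i) \cap f_j(D_j)$, contradicting $(ii)$; hence $i = j$, and then $(i)$ forces $x = y$.

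Finally I would invoke the standard fact that an injective self-map of a finite set is a bijection to conclude that $f$ is a PP of $\F_q$. There is no genuine obstacle: the lemma is essentially the set-theoretic principle that a function assembled from local pieces on a disjoint cover is a bijection if and only if each piece is injective and the pieces have disjoint images. The only care needed is in the contrapositive bookkeeping, and that is handled automatically by the disjointness of the partition.
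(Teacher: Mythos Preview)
Your argument is correct and complete. Note, however, that the paper does not actually prove this lemma: it cites it as \cite[Proposition~3]{CHZ14} and only adds the one-line remark that $f$ is a PP of $\F_q$ if and only if $f_1(D_1),\dotsc,f_m(D_m)$ form a partition of $\F_q$, which is precisely the content your injectivity-plus-disjoint-images argument establishes.
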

In Lemma \ref{PW}, $f(x)$ is divided into $m$ piece functions $f_{1}(x),\cdots,f_{m}(x)$,
namely $f(x) = f_{i}(x)$ for $x \in D_{i}$. Hence $f(x)$ is a PP of $\F_{q}$ if and only if
$f_{1}(D_{1}),\cdots, f_m(D_{m})$ is a partition of $\F_{q}$.
Inspired by the lemma above, we present the following piecewise interpolation method for constructing inverses of all PPs of finite fields.

\begin{lem}\label{inv-pw}
If $f(x)$ in \eqref{f-G} is a PP of $\F_q$, then its inverse over $\F_q$ is given by
\begin{equation}\label{inv_fin set}
  f^{-1}(x) = \sum_{i=1}^{m}\bar{f_i}(x)I_{f_i(D_i)}(x),
\end{equation}
where $\bar{f_i} (f_i(c)) =c$ for $c \in D_i$, and $I_{f_{i}(D_i)}(x)$ is the characteristic function of $f_i(D_i)$.
\end{lem}
\begin{proof}
For any $c \in \F_q$, assume $c \in D_i$ for some $1 \le i \le m$, then $I_{D_{i}}(c)=1$ and $I_{D_{j}}(c)=0$ for $j \ne i$. Hence $f(c)= f_i(c) \in f_{i}(D_{i})$, $I_{f_{i}(D_{i})}(f_{i}(c))=1$ and $I_{f_{j}(D_{j})}(f_{i}(c))=0$ for $j \ne i$. Therefore $f^{-1}(f(c))=\bar{f_{i}}(f_i(c))=c$.
\end{proof}

Lemma \ref{inv-pw} gives a piecewise interpolation formula for the inverse of any PP $f(x)$;
the inverse of $f(x)$ is composed of the inverses of piece functions $f_{i}(x)$ when restricted to $D_{i}$ and the characteristic functions of $f_{i}(D_{i})$. When $m=q$, i.e., every $D_{i}$ has only one element,
the formula~\eqref{inv_fin set} is reduced to the Lagrange Interpolation Formula, and it is inefficient for large $m$. When $m$ is small and $\bar{f_{i}}(x)$ and $I_{f_{i}(D_{i})}(x)$ are known, the formula~\eqref{inv_fin set} is very efficient for any $q$.

For general $f_{i}(x)$ and $D_i$, it is difficult to find $\bar{f_{i}}(x)$ and $I_{f_{i}(D_{i})}(x)$.
But it is easy for some special cases. For instance, when every $f_{i}(x)$ is a PP of $\F_{q}$ and its inverse $\bar{f_i}(x)$ over $\F_{q}$ is known, we have proved that $I_{f_{i}(D_{i})}(x)= I_{D_{i}}(\bar{f_i}(x))$ in our previous work~\cite{zyp-1}. In this paper we remove the restriction that piece functions $f_{i}(x)$ are all PPs of $\F_{q}$, and construct inverses of cyclotomic mapping PPs by using Lemma~\ref{inv-pw}.

\section{Inverses of cyclotomic mapping permutation polynomials}

Let $\xi$ be a primitive element of $\F_{q}$, and $q-1=ds$ for some $d, s \in \Z^{+}$ (positive integers). Let the set of all $s$-th roots of unity in  $\F_{q}$ be
\[
D_0 =\{\xi^{kd} \mid k=0,1,\cdots,s-1\}.
\]
Then $D_0$ is a subgroup of $\F_{q}^{*}$, where $\F_{q}^{*}$ is the multiplication group of all nonzero elements of $\F_{q}$. The elements of the factor group $\F_{q}^{*}/D_0$ are the cyclotomic cosets
\begin{equation}\label{Di}
D_i = \xi^{i}D_0=\{\xi^{kd+i} \mid k=0,1,\cdots,s-1\}, \quad  i=0,1,\cdots,d-1,
\end{equation}
which form a partition of $\F_q^*$. For $a_{0}, \cdots, a_{d-1}\in \F_{q}$ and $r_{0},\cdots,r_{d-1} \in \Z^{+}$, a generalized cyclotomic mapping form $\F_q$ to itself is defined in~\cite{Wang-cyc} by
\begin{equation}\label{map_aixri}
f(x)=\left\{\begin{array}{ll}
 0                & \text{for $x =0$,} \\
 a_{i}x^{r_{i}}   & \text{for $x \in D_i$, $i=0,1,\cdots, d-1$.}
 \end{array}  \right.
\end{equation}
Cyclotomic mappings were introduced in~\cite{NW-CYC} when $r_{0}=\dotsm=r_{d-1} =1$ and in~\cite{Wang07} for  $r_{0}=\dotsm=r_{d-1} >1$. Further information can be found in~\cite[Section~8.1.5]{HFF}.
For some $0 \le i,k \le d-1$ and $x\in D_k$, we have $x^s=\omega^k$ where $\omega=\xi^s$, and so
\[
\sum_{j=0}^{d-1}\Big(\frac{x^s}{\omega^i}\Big)^j=\sum_{j=0}^{d-1}\omega^{(k-i)j}
=\left\{\begin{array}{ll}
 0   & \text{for $k\ne i$,} \\
 d   & \text{for $k = i$.}
 \end{array}  \right.
\]
Hence $f(x)$ in~\eqref{map_aixri} can be uniquely represented in~\cite{Wang-cyc} as
\begin{equation}\label{poly_aixri}
f(x) =\frac{1}{d}\sum_{i=0}^{d-1}a_{i}x^{r_{i}}\sum_{j=0}^{d-1}\Big(\frac{x^s}{\omega^i}\Big)^j
=\frac{1}{d}\sum_{i=0}^{d-1}\sum_{j=0}^{d-1}a_{i}\omega^{-ij}x^{r_{i} +js},
\end{equation}
in the sense of reduction modulo $x^q -x$. Theorem~2.2 in~\cite{Wang-cyc} gives several equivalent necessary and sufficient conditions for $f(x)$ in~\eqref{map_aixri} or \eqref{poly_aixri} to permute $\F_{q}$.
\begin{lem}\label{wang}
(See \cite[Theorem 2.2]{Wang-cyc}) Let $q-1 =ds$ and $d,s,r_{0}, \cdots, r_{d-1} \in \Z^{+}$. Let $a_{0},\cdots,a_{d-1}\in \F_{q}^{*}$ and $\omega =\xi^s$, where $\xi$ a primitive element of  $\F_{q}$.
Then $f(x)$ in~\eqref{poly_aixri} is a PP of $\F_{q}$ if and only if $\gcd(\prod_{i=0}^{d-1}r_{i}, s) = 1$ and $a_{i}^{s} \omega^{i r_{i}} \ne a_{j}^{s} \omega^{j r_{j}}$ for $0 \le i \ne j \le d-1$.
\end{lem}
The following lemma is also needed.
\begin{lem}\label{binomial_thm}(See \cite[Lemma 2.2]{zyp-1}.)
Let $a \in \F_{q}^*$ and $q-1 =ds$, where $d,s \in \Z^{+}$. Then
\[
1-(x^s -a^s)^{q-1} \equiv \frac{1}{d}\sum_{j=1}^{d}\Big(\frac{x^s}{a^s}\Big)^{j} \pmod{x^q-x}.
\]
\end{lem}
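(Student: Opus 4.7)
The plan is to verify the claimed congruence by evaluating both sides as functions on $\F_q$, using the standard fact that two polynomials in $\F_q[x]$ are congruent modulo $x^q-x$ if and only if they induce the same map on $\F_q$. Thus it suffices to show, for every $c \in \F_q$, that
\[
1-(c^s-a^s)^{q-1} = \frac{1}{d}\sum_{j=1}^{d}\bigl(c^s/a^s\bigr)^{j}.
\]
Before starting, I would note that $d$ is invertible in $\F_q$: since $d \mid q-1$, the characteristic $p$ of $\F_q$ does not divide $d$, so $1/d$ makes sense.

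For the left-hand side, I would use that $y^{q-1}$ equals $1$ for $y\in\F_q^{*}$ and $0$ for $y=0$. Thus $1-(c^s-a^s)^{q-1}$ equals $1$ when $c^s=a^s$ and $0$ otherwise. In particular, for $c=0$ it gives $0$ (since $a\neq 0$ forces $a^s\neq 0$).

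For the right-hand side, I would set $y = c^s/a^s$ whenever $c\in\F_q^{*}$. Because $c^{q-1}=1$ and $sd=q-1$, one has $y^d = c^{sd}/a^{sd} = 1$, so $y$ is a $d$-th root of unity. The geometric sum $\sum_{j=1}^{d} y^{j} = y(1+y+\cdots+y^{d-1})$ therefore equals $d$ if $y=1$ and $0$ if $y\neq 1$. Dividing by $d$ yields $1$ exactly when $c^s=a^s$ and $0$ when $c^s\neq a^s$. For $c=0$ every summand vanishes, giving $0$. In all three cases the two sides agree, which is what I need.

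The whole argument is really just a careful case analysis built on two observations: $y^{q-1}\in\{0,1\}$ for $y\in\F_q$, and $\sum_{j=1}^{d}y^j$ collapses to $0$ or $d$ depending on whether $y=1$ among $d$-th roots of unity. I do not anticipate any genuine obstacle; the only delicate point is recognising that the congruence modulo $x^q-x$ is equivalent to pointwise equality on $\F_q$, which legitimises the function-valued comparison rather than a term-by-term polynomial identification (the left side has much larger formal degree than the right).
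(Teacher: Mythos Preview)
Your argument is correct: reducing the congruence modulo $x^q-x$ to a pointwise check on $\F_q$, then handling the three cases $c=0$, $c^s=a^s$, and $c^s\neq a^s$ via the collapse of the geometric sum over $d$-th roots of unity, is a complete and clean proof.

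The paper itself does not supply a proof of this lemma at all; it simply quotes the result from \cite[Lemma~2.2]{zyp-1}. So there is no in-paper argument to compare against, and your self-contained verification fills that gap. The same pointwise reasoning (that $1+(x^s/\omega^i)+\cdots+(x^s/\omega^i)^{d-1}$ equals $d$ on $D_i$ and $0$ on $D_j$ for $j\neq i$) does appear earlier in the paper when it shows the equivalence of \eqref{map_aixri} and \eqref{poly_aixri}, which is essentially the right-hand-side half of your computation; your proof just pairs that with the obvious evaluation of $1-(c^s-a^s)^{q-1}$.
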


Now we give the main result.

\begin{thm}\label{thm_aixri}
Let the notation be as in Lemma \ref{wang}. If $f(x)$ in~\eqref{poly_aixri} is a PP of $\F_{q}$, then its inverse over $\F_{q}$ is given by
\[
f^{-1}(x)=\frac{1}{d}\sum_{i=0}^{d-1}\sum_{j=0}^{d-1}
 \omega^{i(t_i-jr_i)}\Big(\frac{x}{a_i}\Big)^{\widetilde{r_i} + js},
\]
where $\widetilde{r_i}$, $t_{i} \in \Z$ satisfy $1\le \widetilde{r_i} <s$ and $r_i\widetilde{r_i}+st_i=1$.
\end{thm}
\begin{proof}
If $f(x)$ is a PP of $\F_{q}$, then $\gcd(\prod_{i=0}^{d-1}r_{i}, s) =1$. There exist $\widetilde{r_i}$, $t_i \in \Z$ such that $r_{i}\widetilde{r_i}+st_i =1$. Next we prove that the inverse of $f(x)$ over $\F_{q}$ is
\[
f^{-1}(x) = \sum_{i=0}^{d-1}\omega^{it_{i}}(x/a_i)^{\widetilde{r_i}}
\big[1-(x^{s} - a_{i}^{s} \omega^{i r_{i}})^{q-1} \big].
\]
Let $D_i$ be defined by~\eqref{Di} and $f_{i}(x) = a_{i}x^{r_{i}}$. Then $f(x)=f_{i}(x)$ for $x \in D_{i}$. Let $\bar{f_{i}}(x) =\omega^{it_{i}}(x/a_i)^{\widetilde{r_i}}$. Then for any $c =\xi^{kd+i} \in D_{i}$ we have
\[
\bar{f_{i}} (f_{i}(c))
=\omega^{it_i}\xi^{(kd+i)r_i\widetilde{r_i}}
=(\xi^s)^{it_i}\xi^{(kd+i)(1-st_i)}
=\xi^{kd+i}=c.
\]
Now we show that the characteristic function of $f_{i}(D_{i})$ is
\[
I_{f_i(D_i)}(x) =1-(x^{s} - a_{i}^{s}\omega^{i r_{i}})^{q-1}.
\]
It follows from $\gcd(r_{i}, s)= 1$ that $\{kr_{i} \mid k=0,1,\cdots,s-1\}$ is a complete residue system modulo $s$. Therefore
\[\begin{split}
f_{i}(D_i) &= \{a_{i}\xi^{(kr_{i})d  +ir_{i}} \mid k =0,1,\cdots,s-1\} \\
&= \{a_{i}\xi^{kd +ir_{i}} \mid k=0,1,\cdots,s-1\}.
\end{split}\]
If $x=a_{i}\xi^{kd +ir_{i}} \in f_{i}(D_{i})$, then $x^s =a_{i}^s \omega^{ir_{i}}$ and so $I_{f_i(D_i)}(x)=1$.
If $x \in f_{j}(D_{j})$ and $j \ne i$, then $x^s =a_{j}^s \omega^{jr_{j}} \ne a_{i}^{s}\omega^{i r_{i}}$ since $f(x)$ is a PP of $\F_{q}$. Hence $I_{f_i(D_i)}(x)=0$. By Lemma~\ref{inv-pw}, $f^{-1}(x)$ is the inverse of $f(x)$ over $\F_{q}$.

Next we change the form of $f^{-1}(x)$. It follows from Lemma~\ref{binomial_thm} that
\[
1-\big( x^{s} - a_{i}^{s} \omega^{i r_{i}} \big)^{q-1}
\equiv \frac{1}{d}\sum_{j=1}^{d}\Big(\frac{x^s}{a_{i}^s\omega^{ir_i}} \Big)^{j} \pmod{x^q-x}.
\]
Also note that $x^{\widetilde{r_i} +ds} \equiv x^{\widetilde{r_i}} \pmod{x^q -x}$ and $(a_{i}^s\omega^{ir_i})^d=1$. Hence,
\[\begin{split}
f^{-1}(x)
&=\frac{1}{d}\sum_{i=0}^{d-1}\omega^{it_{i}}\Big(\frac{x}{a_i}\Big)^{\widetilde{r_i}}\sum_{j=1}^{d}
         \Big(\frac{x^s}{a_{i}^s\omega^{ir_i}} \Big)^{j}
 \equiv\frac{1}{d}\sum_{i=0}^{d-1}\omega^{it_{i}}\Big(\frac{x}{a_i}\Big)^{\widetilde{r_i}}\sum_{j=0}^{d-1}
         \Big(\frac{x^s}{a_{i}^s\omega^{ir_i}} \Big)^{j} \\
& \equiv \frac{1}{d}\sum_{i=0}^{d-1}\sum_{j=0}^{d-1}
 \omega^{i(t_{i}-jr_i)}\Big(\frac{x}{a_i}\Big)^{\widetilde{r_i} + js} \pmod{x^q-x}.
\end{split} \]
For different integers $\widetilde{r_i}$ and $t_i$ such that $r_{i}\widetilde{r_i}+st_i =1$, it is easy to show that $f^{-1}(x)$ is unique in the sense of reduction modulo $x^q -x$. So we may take $1\le \widetilde{r_i} <s$.
\end{proof}

\section{Application}
Special cases of the main result are considered in this section.
By Theorem~\ref{thm_aixri}, if $f(x)$ is a PP of $\F_{q}$, then $f^{-1}(x)$ has at most $d^2$ terms. When $d$ is not very large, Theorem~\ref{thm_aixri} gives an efficient method to find $f^{-1}(x)$.
The following is an example for $d=2$.

Let $q$ be odd, $s=(q-1)/2$, $a_{0}, a_{1}\in \F_{q}^{*}$ and $r_0, r_1 \in \Z^+$.
 \cite[Corollary~2.3]{Wang-cyc}~stated~that
\begin{equation}\label{f-d=2}
f(x)=\tfrac{1}{2}a_0 x^{r_0}(1+x^{s}) +\tfrac{1}{2}a_1 x^{r_1}(1-x^{s})
\end{equation}
is a PP of $\F_{q}$ if and only if $\gcd(r_0r_1, s)=1$ and $(a_0a_1)^{s}=(-1)^{r_1+1}$.
\begin{cor}\label{ri_d=2}
If $f(x)$ in \eqref{f-d=2} is a PP of $\F_{q}$, then its inverse over $\F_{q}$ is given by
\[
f^{-1}(x)=\frac{1}{2}\Big(\frac{x}{a_0}\Big)^{\widetilde{r_0}} \Big(1 +\Big(\frac{x}{a_0}\Big) ^{s}\Big)
 +\frac{1}{2}(-1)^{t_1}\Big(\frac{x}{a_1}\Big)^{\widetilde{r_1}} \Big(1 +(-1)^{r_1}\Big(\frac{x}{a_1}\Big) ^{s}\Big),
\]
where $\widetilde{r_i}$, $t_{i}\in\Z$ satisfy  $1\le \widetilde{r_i} <s$ and $r_i \widetilde{r_i}+st_i=1$.
\end{cor}

Applying Corollary \ref{ri_d=2} to $a_0 =a_1=1$, we obtain a class of self-inverse PPs.

\begin{cor}\label{selfinv2}
Let $q$ be odd and $s=(q-1)/2$. Let $r_0,r_1 \in \Z^+$ be such that $s \mid r_0^2-1$ and $2s \mid r_1^2 -1$. Then
\[
f(x)=\tfrac{1}{2}x^{r_0} (1 +x ^{s}) +\tfrac{1}{2}x^{r_1} (1 -x^{s})
\]
is a self-inverse PP over $\F_{q}$, i.e., $f(x)$ is a PP of $\F_{q}$ and $f^{-1}(x)=f(x)$.
\end{cor}
\begin{proof}
If $s \mid r_0^2-1$, then $r_0^2+st_0=1$ for some $t_0 \in \Z$. Hence $\gcd(r_0, s)=1$.
Similarly, $r_1^2 +2ms=1$ for some $m \in \Z$. Thus $\gcd(r_1, s)=1$ and $r_1$ is odd.
Now the result a direct consequence of \cite[Corollary~2.3]{Wang-cyc} and Corollary~\ref{ri_d=2}.
\end{proof}

In a symmetric cryptosystem, the decryption function is usually the same as the encryption function. Hence self-inverse PPs would be potentially useful in symmetric cryptosystems. According to the computation of mathematical software, Theorem \ref{thm_aixri} includes numerous self-inverse PPs such as $x^5+2x^3+5x$ and $2x^5+3x^3+3x$ are self-inverse PPs of $\F_7$, $x^{11}+11x^5$ and $9x^{11}+6x^8+9x^2$ are self-inverse PPs of $\F_{13}$.

Next we consider the case that $d\ge3$, $a_1= \dotsm =a_d$ and $r_1= \dotsm =r_d$.
\begin{cor}\label{r1=ri}
Let $q-1=ds$, $d\ge3, s, r_0, r_1 \in \Z^+$, $a_{0}, a_{1}\in \F_{q}^{*}$ and
\[
f(x)=(1/d)(a_0 x^{r_0} -a_1 x^{r_1})(1 +x^{s} +\dotsb +x^{(d-1)s}) +a_1 x^{r_1}.
\]
 Then $f(x)$ is a PP of $\F_{q}$ if and only if $\gcd(r_0 r_1, s)=\gcd(r_1, d)=1$ and $a_{0}^{s}=a_{1}^{s}$.
In this case the inverse of $f(x)$ over $\F_{q}$ is given by
\[
f^{-1}(x)\!=\!(1/d)\big[ (x/a_0)^{\widetilde{r_0}} -(x/a_1)^{\widetilde{r_1}}\big]
          \big[1 +(x/a_1)^{s} +\dotsb +(x/a_1)^{(d-1)s}\big] +(x/a_1)^{\widetilde{r_1}+us}.
\]
where $\widetilde{r_i}$ is the inverse of $r_i$ modulo $s$, $0 \le u < d$ and $u \equiv r'_1(1-r_1 \widetilde{r_1})/s \pmod{d}$, and $r'_1$ is the inverse of $r_1$ modulo $d$.
\end{cor}

\begin{proof}
For $D_i$ in \eqref{Di} and $x\in D_i$, $x^{s} =\omega^i$.
Since $\sum_{j=0}^{d-1}(\omega^{i})^j=0$ for $1 \le i \le d-1$,
\[ f(x) =\left\{\begin{array}{ll}
 a_0 x^{r_0}   & \text{for $x \in D_0$,} \\
 a_1 x^{r_1}   & \text{for $x \in D_i$, $1 \le i \le d-1$.}
 \end{array}  \right.
\]

(i) By Theorem 2.2 in \cite{Wang-cyc}, $f(x)$ is a PP of $\F_{q}$ if and only if $\gcd(r_0 r_1, s)=1$ and
\[
\{a_0^s, a_1^s\omega^{r_1}, \cdots, a_1^s\omega^{(d-1)r_1}\}=\{1, \omega, \cdots, \omega^{d-1}\}.
\]
Because $a_1^s$ is a power of $\omega$, the latter condition is equivalent to
\begin{equation}\label{=ud}
\{a_0^s/a_1^s,  \omega^{r_1}, \cdots, \omega^{(d-1)r_1}\}=\{1, \omega, \cdots, \omega^{d-1}\}.
\end{equation}
It is easy to show that \eqref{=ud} is equivalent to $\gcd(r_1, d)=1$ and $a_0^s/a_1^s=1$.

(ii) If $f(x)$ is a PP of $\F_{q}$, then $\gcd(r_0 r_1, s)=\gcd(r_1, d)=1$ and $a_{0}^{s}=a_{1}^{s}$. For $i=0$ or $1$, there exist $\widetilde{r_i}$, $t_{i} \in \Z$ such that $r_i \widetilde{r_i}+st_i=1$. By Theorem \ref{thm_aixri},
\[
f^{-1}(x)= (1/d)\sum_{j=0}^{d-1}(x/a_0)^{\widetilde{r_0}+js}
+(1/d)\sum_{i=1}^{d-1}\sum_{j=0}^{d-1}\omega^{i(t_1 -jr_1)}(x/a_1)^{\widetilde{r_1}+js}.
\]
Let $0 \le u < d$ and $u \equiv r'_1(1-r_1 \widetilde{r_1})/s \pmod{d}$, where $r_1r'_1 \equiv 1 \pmod{d}$. Then
\[
ur_1 \equiv r_1r'_1(1-r_1 \widetilde{r_1})/s \equiv (1-r_1 \widetilde{r_1})/s \equiv st_1/s
\equiv t_1 \pmod{d},
\]
Hence $\sum_{i=1}^{d-1}\omega^{i(t_1 -jr_1)}=-1$ for $0\le j \ne u  \le d-1$, and so
\[\begin{split}
&\quad \sum_{i=1}^{d-1}\sum_{j=0}^{d-1}\omega^{i(t_1 -jr_1)}(x/a_1)^{\widetilde{r_1}+js}
=\sum_{j=0}^{d-1}\sum_{i=1}^{d-1}\omega^{i(t_1 -jr_1)}(x/a_1)^{\widetilde{r_1}+js}\\
&=(d-1)(x/a_1)^{\widetilde{r_1}+us} -\sum_{j \ne u }(x/a_1)^{\widetilde{r_1}+js}
=d(x/a_1)^{\widetilde{r_1}+us} -\sum_{j=0}^{d-1}(x/a_1)^{\widetilde{r_1}+js}.
\end{split}\]
Also note that $a_{0}^{s}=a_{1}^{s}$. We obtain
\[\begin{split}
f^{-1}(x)
& = (1/d)\textstyle\sum_{j=0}^{d-1}(x/a_0)^{\widetilde{r_0}+js}
-(1/d)\sum_{j=0}^{d-1}(x/a_1)^{\widetilde{r_1}+js} +(x/a_1)^{\widetilde{r_1}+us} \\
& = (1/d)\textstyle\sum_{j=0}^{d-1}[(x/a_0)^{\widetilde{r_0}+js}-(x/a_1)^{\widetilde{r_1}+js}] +(x/a_1)^{\widetilde{r_1}+us} \\
& = (1/d)\textstyle\sum_{j=0}^{d-1}[(x/a_0)^{\widetilde{r_0}}(x/a_1)^{js}-(x/a_1)^{\widetilde{r_1}+js}] +(x/a_1)^{\widetilde{r_1}+us} \\
& = (1/d)\textstyle[(x/a_0)^{\widetilde{r_0}}-(x/a_1)^{\widetilde{r_1}}]\sum_{j=0}^{d-1}(x/a_1)^{js} +(x/a_1)^{\widetilde{r_1}+us}. \qedhere
\end{split}\]
\end{proof}

The first part of Corollary~\ref{r1=ri} generalizes \cite[Proposition 2.11]{Wang-cyc} which requires that $d$ is a prime divisor of $q-1$. Moreover, \cite[Proposition 2.11]{Wang-cyc} is incorrect for $l=2$, and the expression $l-1$ should be $1-l$.

Let $n,i,j \in \Z^+$ and $s=(2^{2n}-1)/3$. Corollary 2.7 in~\cite{Wang-cyc} states that
\begin{equation}\label{f-d=3}
f(x)=\big(x^{2^i} +x^{2^j}\big)\big(1 +x^{s} +x^{2s}\big) +x^{2^j}
\end{equation}
is a PP of $\F_{2^{2n}}$. The following is a direct consequence of Corollary~\ref{r1=ri}.
\begin{cor}
The inverse of $f(x)$ in~\eqref{f-d=3} over $\F_{2^{2n}}$ is given by
\[
f^{-1}(x)=\big( x^{\widetilde{2^i}} +x^{\widetilde{2^j}} \big)\big(1 +x^{s} +x^{2s}\big) +x^{\widetilde{2^j} +us},
\]
where $\widetilde{2^k}$ is the inverse of $2^k$ modulo $s$, $0 \le u < 3$ and
$u \equiv (-1)^j\big(1-2^j \widetilde{2^j} \big)/s \pmod{3}$.
\end{cor}

According to our knowledge, Wan and Lidl \cite{WL91} made the first systematic study of PPs of $\F_{q}$ of the form $f(x)=x^{r}h(x^s)$, where $q-1 =ds$, $1 \le r <s$ and $h(x) \in \F_{q}[x]$. A criterion for $f(x)$ to be a PP of  $\F_{q}$ was given in \cite{WL91}. Later on, several equivalent criteria are found in other papers; see for instance \cite{AW07,Hou15,HFF,PL01,Wang07, Zieve09}. One of the criteria is that $f(x)$ is a PP of $\F_{q}$ if and only if $\gcd(r, s)= 1$ and $x^rh(x)^{s}$ permutes $\{1, \omega,\omega^2, \cdots, \omega^{d-1}\}$, where $\omega =\xi^s$ and $\xi$ is a primitive element of $\F_{q}$. Next we employ our main result to deduce the inverse of $f(x)$.

\begin{cor}\label{xrhxs}
With the conditions and the notation introduced above, if $f(x)=x^{r}h(x^s)$ is a PP of $\F_{q}$,
then its inverse over $\F_{q}$ is given by
\[
f^{-1}(x)=\frac{1}{d}\sum_{i=0}^{d-1}\sum_{j=0}^{d-1}
         \omega^{i(t-jr)}\big(x/h(\omega^{i}) \big)^{\widetilde{r} +js},
\]
where $\widetilde{r}$, $t \in \Z$ satisfy  $1\le \widetilde{r} <s$ and $r\widetilde{r}+st=1$.
\end{cor}
\begin{proof}
For $D_i$ in \eqref{Di} and $x\in D_i$, we have $x^{s} =\omega^i$ and so $f(x) =x^{r}h(\omega^{i})$.
The proof can be completed by substituting $h(\omega^{i})$ and $r$ for $a_i$ and $r_i$ in Theorem~\ref{thm_aixri}.
\end{proof}

 Corollary \ref{xrhxs} is actually the same as Theorem 2.1 in \cite{Wang-1}.

\section*{Acknowledgments}
We are grateful to the referees for many useful comments and suggestions.
\section*{References}

\end{CJK}

\begin{thebibliography}{99}

\bibitem{AW07}
A. Akbary, Q. Wang, On polynomials of the form $x^rf(x^{(q-1)/l})$, Int. J. Math. Math. Sci. (2007), Art. ID 23408, 7 pp.

\bibitem{CHZ14}
X. Cao, L. Hu, Z. Zha, Constructing permutation polynomials from piecewise permutations,
Finite Fields Appl. 26 (2014) 162--174.

\bibitem{Carlitz62}
L. Carlitz, Some theorems on permutation polynomials, Bull. Amer. Math. Soc. 68 (1962) 120--122.

\bibitem{Coulter-1}
R. S. Coulter, M. Henderson, The compositional inverse of a class of permutation polynomials over a finite field, Bull. Aust. Math. Soc. 65 (2002) 521--526.

\bibitem{FH-PW}
N. Fernando, X. Hou, A piecewise construction of permutation polynomial over finite fields,
Finite Fields Appl. 18 (2012) 1184--1194.

\bibitem{Hou11}
X. Hou, Two classes of permutation polynomials over finite fields, J. Comb. Theory, (Ser. A) 118 (2011) 448--454.

\bibitem{Hou15}
X. Hou, Permutation polynomials over finite fields--A survey of recent advances, Finite Fields Appl., 32 (2015) 82--119.

\bibitem{LHT13}
N. Li, T. Helleseth, X. Tang, Further results on a class of permutation polynomials over
finite fields, Finite Fields Appl. 22 (2013) 16--23.

\bibitem{HFF}
G. L. Mullen, D. Panario, Handbook of Finite Fields, CRC Press, 2013.

\bibitem{MR}
A. Muratovi\'{c}-Ribi\'{c}, A note on the coefficients of inverse polynomials, Finite Fields Appl. 13 (2007) 977--980.

\bibitem{cm}
H. Niederreiter, K.H. Robinson, Complete mappings of finite fields, J. Austral. Math. Soc. (Ser. A) 33 (1982) 197--212.

\bibitem{NW-CYC}
H. Niederreiter, A. Winterhof, Cyclotomic R-orthomorphisms of finite fields, Discrete Math. 295 (2005) 161--171.

\bibitem{PL01}
Y. H. Park, J. B. Lee, Permutation polynomials and group permutation polynomials, Bull. Austral. Math. Soc.
63 (2001) 67--74.

\bibitem{TW-1}
A. Tuxanidy, Q. Wang, On the inverses of some classes of permutations of finite fields, Finite Fields Appl. 28 (2014) 244--281.

\bibitem{WL91}
D. Wan, R. Lidl, Permutation polynomials of the form $x^rf(x^{(q-1)/d})$ and their group structure, Monatsh. Math. 112 (1991) 149--163.

\bibitem{Wang07}
Q. Wang, Cyclotomic mapping permutation polynomials over finite fields, in Sequences, subsequences, and consequences, Lecture Notes in Comput. Sci., vol. 4893, Springer, Berlin, 2007, pp 119--128.

\bibitem{Wang-1}
Q. Wang, On inverse permutation polynomials, Finite Fields Appl. 15 (2009) 207--213.

\bibitem{Wang-cyc}
Q. Wang, Cyclotomy and permutation polynomials of large indices, Finite Fields Appl. 22 (2013) 57--69.

\bibitem{Wu_L-1}
B. Wu, The compositional inverse of a class of linearized permutation polynomials over $\F_{2^n}$, $n$ odd, Finite Fields Appl. 29 (2014) 34--48.

\bibitem{Wu_L}
B. Wu, Z. Liu, Linearized polynomials over finite fields revisited, Finite Fields Appl. 22 (2013) 79--100.

\bibitem{Wu_bil}
B. Wu, Z. Liu, The compositional inverse of a class of bilinear permutation polynomials over finite fields of characteristic 2, Finite Fields Appl. 24 (2013) 136--147.

\bibitem{YD-AGW2}
P. Yuan, C. Ding, Further results on permutation polynomials over finite fields, Finite Fields Appl. 27 (2014) 88--103.

\bibitem{ZH12}
Z. Zha, L. Hu, Two classes of permutation polynomials over finite fields, Finite Fields Appl. 18 (2012) 781--790.

\bibitem{zyp-1}
Y. Zheng, P. Yuan, D. Pei, Piecewise constructions of inverses of some permutation polynomials, Finite Fields Appl. 36 (2015) 151--169.

\bibitem{Zieve09}
M. E. Zieve, On some permutation polynomials over $\F_q$ of the form $x^rh(x^{(q-1)/d})$. Proc. Amer. Math. Soc. 137, 2209--2216 (2009).

\end{thebibliography}
\end{document}